\numberwithin{equation}{section}
\theoremstyle{plain}
\newtheorem{theorem}{Theorem}[section]
\newtheorem{corollary}[theorem]{Corollary}
\newtheorem{lemma}[theorem]{Lemma}
\theoremstyle{remark}
\newtheorem{remarks}[theorem]{Remarks}
\newtheorem*{ack}{Acknowledgement}
\theoremstyle{definition}
\newtheorem{question}[theorem]{Question}
\newcommand{\sF}{\mathscr{F}}
\newcommand{\sE}{\mathscr{E}}
\newcommand{\R}{\mathbb{R}}
\newcommand{\N}{\mathbb{N}}
\DeclareMathOperator{\diam}{diam}
\DeclareMathOperator{\spt}{spt}
\DeclareMathOperator{\Geo}{Geo}
\DeclareMathOperator{\GeoOpt}{GeoOpt}
\def\Xint#1{\mathchoice
{\XXint\displaystyle\textstyle{#1}}%
{\XXint\textstyle\scriptstyle{#1}}%
{\XXint\scriptstyle\scriptscriptstyle{#1}}%
{\XXint\scriptscriptstyle\scriptscriptstyle{#1}}%
\!\int}
\def\XXint#1#2#3{{\setbox0=\hbox{$#1{#2#3}{\int}$ }
\vcenter{\hbox{$#2#3$ }}\kern-.6\wd0}}
\def\dashint{\Xint-}
\begin{document}

\title[Local Poincar\'e inequalities on metric spaces]
{Local Poincar\'e inequalities from stable curvature conditions on metric spaces}

\author{Tapio Rajala}
\address{Scuola Normale Superiore\\
Piazza dei Cavalieri 7\\
I-56127 Pisa\\ Italy}
\email{tapio.rajala@sns.it}

\thanks{The author acknowledges the support of the European Project ERC AdG *GeMeThNES* and the Academy of Finland project no. 137528.}
\subjclass[2000]{Primary 53C23. Secondary 28A33, 49Q20}
\keywords{Ricci curvature, metric measure spaces, geodesics, Poincar\'e inequality}
\date{\today}


\begin{abstract}
We prove local Poincar\'e inequalities under various curvature-dimension conditions which are
stable under the measured Gromov-Hausdorff convergence. The first class of spaces we consider
is that of weak $CD(K,N)$ spaces as defined by Lott and Villani.
The second class of spaces
we study consists of spaces where we have a flow satisfying an evolution variational inequality
for either the R\'enyi entropy functional $\sE_N(\rho m) = -\int_X \rho^{1-1/N} dm$ or the Shannon
entropy functional $\sE_\infty(\rho m) = \int_X \rho \log \rho dm$.
We also prove that if the R\'enyi entropy functional is strongly displacement convex in the Wasserstein
space, then at every point of the space we have unique geodesics to almost all points of the space.
\end{abstract}


\maketitle



\section{Introduction}

In the recent years the concept of lower Ricci-curvature bounds has been generalized from the Riemannian setting to the
setting of geodesic metric measure spaces, most prominently in the works of Sturm \cite{S2006I, S2006II} and
Lott and Villani \cite{LV2009}. 
Different authors have studied slightly different definitions of curvature bounds.
What is common to all these definitions is that they strive to fulfill the same set of criteria: to naturally extend
Ricci-curvature bounds of Riemannian manifolds, to be stable under the measured Gromov-Hausdorff convergence, and to 
provide enough structure for meaningful analysis on the metric space.

The first two criteria are well met \cite{S2006I, S2006II, LV2009}, whereas the third one has so far been met only partially.
From a classical result by Buser \cite{B1982} we know that a Riemannian manifold with nonnegative Ricci-curvature supports a
Poincar\'e inequality. Moreover, in the case of measured Gromov-Hausdorff limits of Riemannian manifolds with Ricci-curvature
bounded below we know that a local Poincar\'e inequality always holds \cite{CC2000}.
In order to prove a local Poincar\'e inequality for a larger class of metric spaces with Ricci-curvature lower-bounds the stability under
measured Gromov-Hausdorff convergence had to be sacrificed in \cite{LV2007} by assuming the spaces to be nonbranching.
The purpose of this paper is to complete this part of the theory for some versions of curvature-dimension conditions by proving
local Poincar\'e inequalities without the unnatural extra assumption of nonbranching. The local Poincar\'e inequality
together with a doubling measure constitute the very foundation of analysis on metric spaces
which was pioneered by Heinonen and Koskela \cite{HK1998} and Cheeger \cite{C1999}. For an introduction to analysis on
metric spaces we refer to the book by Heinonen \cite{H2001}. Poincar\'e inequalities have been proven in many classes of
metric spaces, for example in locally linearly contractible Ahlfors-regular metric spaces \cite{S1996}.

A metric measure space $(X,d,m)$ admits a weak local $(q,p)$-Poincar\'e inequality with $1 \le p \le q < \infty$ if there
exist constants $\lambda \ge 1$ and $ 0 < C < \infty$ such that for any continuous function $u$ defined on $X$, any point 
$x \in X$ and radius $r>0$ such that $m(B(x,r)) > 0$ and any upper gradient $g$ of $u$ we have
\begin{equation}\label{eq:Poincaredefinition}
  \left(\dashint_{B(x,r)}|u - \langle u\rangle_{B(x,r)}|^qdm\right)^{1/q} \le C r \left(\dashint_{B(x,\lambda r)}g^pdm\right)^{1/p},
\end{equation}
where the barred integral denotes the average integral and $\langle u\rangle_{B(x,r)}$ denotes the average of $u$ in the
ball $B(x,r)$. Recall that, as introduced in \cite{HK1998}, a Borel function $g \colon X \to [0,\infty]$ is an upper gradient
of $u$ if for any constant speed curve $\gamma \colon [0,1] \to X$ with length $l(\gamma) < \infty$ we have
\[
 |u(\gamma(0)) - u(\gamma(1))| \le l(\gamma) \int_0^1g(\gamma(t))dt.
\]

The most studied Poincar\'e inequalities are $(1,p)$-Poincar\'e inequalities. From H\"older's inequality it follows immediately
that a weak local $(1,p)$-Poincar\'e inequality implies a weak local $(1,p')$-Poincar\'e inequality for every $p' > p$.
In this paper we consider weak local $(1,1)$-Poincar\'e inequalities which we simply call weak local Poincar\'e inequalities.
The word \emph{weak} in the weak local Poincar\'e inequality refers to the fact that we allow the ball on the right-hand side of \eqref{eq:Poincaredefinition} to be
larger than the one on the left. If the balls on both sides of the inequality can be taken to be the same, meaning that we can
take $\lambda = 1$, the inequality is called a strong local Poincar\'e inequality. In a doubling geodesic metric space the weak
local Poincar\'e inequality implies the strong one, with possibly a different constant $C$, see \cite{HK1995} and also \cite{HK2000}.
We say that a measure $m$ is doubling (with a constant $1\le D<\infty$) if for all $x \in X$ and $0 < r < \diam(X)$ we have
\[
 m(B(x,2r)) \le D m(B(x,r)).
\]

Lott and Villani gave in \cite{LV2009} a definition for nonnegative $N$-Ricci curvature with $N \in [1,\infty)$  and a definition
for $\infty$-Ricci curvature being bounded below by $K \in \R$. In their definition they required weak displacement convexity
for a collection $\mathcal{DC}_N$ of suitable convex functionals. (For the precise definitions, see Section \ref{sec:definitions}.)
More specifically they required that between any two probability measures that have bounded Wasserstein distance between them
there is at least one geodesic in the Wasserstein space of probability measures along which all the 
functionals in $\mathcal{DC}_N$ satisfy a convexity inequality. At the same time when Lott and Villani defined their curvature-dimension bounds
another definition was given by Sturm \cite{S2006I, S2006II}. He defined spaces where $N$-Ricci curvature is bounded from below by
a constant $K \in \R$. In Sturm's definition the displacement convexity is also required along only one geodesic but for only one 
critical functional. Later Lott and Villani combined the two different definitions by considering what we call here $CD(K,N)$ 
spaces with $K \in \R$ referring to the lower bound on curvature and $N \in [1,\infty]$ to the upper bound on dimension.
In this definition weak displacement convexity is again required for the whole collection $\mathcal{DC}_N$.
The assumptions in the definition  by Sturm are a priori weaker. However, in nonbranching metric spaces the two notions agree, 
see for example \cite{V2009}. It should be emphasized that our point is to avoid making the nonbranching assumption. Therefore 
the results we get for $CD(K,N)$ spaces do not necessarily hold under the definitions of Sturm.

Let us present the Poincar\'e inequalities we are able to obtain. The first result is for the large class of $CD(K,\infty)$ spaces.
\begin{theorem}\label{thm:CD1}
 Suppose that $(X,d,m)$ is a $CD(K,\infty)$ space with $K \le 0$. Then the weak local Poincar\'e inequality
 \[
  \int_{B(x,r)}|u - \langle u\rangle_{B(x,r)}|dm \le 4 r e^{|K|r^2}\int_{B(x,2 r)}gdm
 \] 
 holds for any continuous function $u$ defined on $X$, any upper gradient $g$ of $u$ and for each point 
 $x \in X$ and radius $r>0$.
\end{theorem}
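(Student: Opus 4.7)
The plan is to bound the mean oscillation of $u$ on $B=B(x,r)$ by integrating the upper gradient $g$ along Wasserstein geodesics between two probability measures supported on $B$, and to control the densities of the resulting interpolants through the $K$-convexity of Shannon entropy, which is exactly the $CD(K,\infty)$ condition.

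First, by Jensen's inequality,
\[
\int_B |u-\langle u\rangle_B|\,dm \;\le\; \frac{1}{m(B)}\int_B\int_B |u(y)-u(z)|\,dm(y)\,dm(z).
\]
For any $y,z\in B$ and any geodesic $\gamma$ from $y$ to $z$ one has $\gamma\subset B(x,2r)$ and $d(y,z)\le 2r$, so the upper-gradient inequality gives $|u(y)-u(z)|\le 2r\int_0^1 g(\gamma(t))\,dt$. Because a pointwise measurable selection of geodesics is delicate without nonbranching, I would split $B$ at the median of $u$ so that $m(B_+)=m(B_-)=m(B)/2$ with $\mu_\pm=m|_{B_\pm}/m(B_\pm)$, reducing the mean oscillation to the transport quantity $(m(B)/2)\bigl(\langle u\rangle_{\mu_+}-\langle u\rangle_{\mu_-}\bigr)$. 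Invoking $CD(K,\infty)$, choose a $W_2$-geodesic $(\mu_t)$ from $\mu_+$ to $\mu_-$ along which entropy is $K$-convex, and lift it to a probability measure $\mathbf{P}$ on $\Geo(X)$ with $\mu_t=(e_t)_*\mathbf{P}$. Applying the upper gradient geodesic-by-geodesic against $\mathbf{P}$ and using Fubini reduces everything to estimating $\int_0^1\!\!\int_X g\,d\mu_t\,dt$.

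The crux is to control $\int g\,d\mu_t$ by $\tfrac{C}{m(B)}\int_{2B}g\,dm$ for $C\sim e^{|K|r^2}$, which amounts to a pointwise density estimate $\rho_t\le C/m(B)$ on $2B$, where $\mu_t=\rho_t m$. The $CD(K,\infty)$ displacement convexity gives
\[
\mathrm{Ent}(\mu_t)\le(1-t)\mathrm{Ent}(\mu_+)+t\mathrm{Ent}(\mu_-)-\tfrac{K}{2}t(1-t)W_2^2(\mu_+,\mu_-),
\]
and with $\mathrm{Ent}(\mu_\pm)=-\log(m(B)/2)$, $W_2(\mu_+,\mu_-)\le 2r$, and $K\le 0$, the total entropy of $\mu_t$ is bounded by $-\log m(B)+\log 2+\tfrac{|K|r^2}{2}$.

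The main obstacle is upgrading this integrated entropy bound to the pointwise density estimate needed for the argument. In nonbranching spaces the displacement convexity passes automatically to the along-geodesic bound
\[
-\log\rho_t(\gamma(t))\ge (1-t)(-\log\rho_0(\gamma(0)))+t(-\log\rho_1(\gamma(1)))-\tfrac{|K|}{2}t(1-t)d(\gamma(0),\gamma(1))^2
\]
for $\mathbf{P}$-a.e.\ $\gamma$, which with $\rho_0,\rho_1\le 2/m(B)$ and $d\le 2r$ yields $\rho_t\le 2e^{|K|r^2/2}/m(B)$. Without nonbranching I expect to recover the same conclusion by disintegrating $\mathbf{P}$ against its endpoint coupling and exploiting the essential uniqueness of $W_2$-geodesics between absolutely continuous measures flagged in the abstract; this is the single place where branching must be confronted, and I anticipate the full strength of the $CD(K,\infty)$ axiom (as opposed to weaker Sturm-type variants) to be needed precisely to single out a well-behaved lift. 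Tracking the constants through the reduction then delivers the claimed Poincar\'e constant $4r\,e^{|K|r^2}$.
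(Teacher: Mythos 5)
Your reduction — Jensen, the median split of $B$ into $B^\pm$ of equal measure, lifting a Wasserstein geodesic between $\mu_+$ and $\mu_-$ to a plan on $\Geo(X)$, and Fubini against the upper gradient — is exactly the paper's skeleton, and you have correctly located the crux: one needs a \emph{pointwise} bound $\rho_t\le C/m(B)$, and the integrated entropy inequality $\sE_\infty(\mu_t)\le(1-t)\sE_\infty(\mu_+)+t\sE_\infty(\mu_-)+\tfrac{|K|}{2}t(1-t)W_2^2$ does not deliver it. But your proposed way across that gap does not work. The along-geodesic inequality $-\log\rho_t(\gamma(t))\ge\dots$ is precisely the localization of displacement convexity that is available only via restriction/uniqueness arguments in nonbranching spaces; reintroducing it is reintroducing the hypothesis the theorem is designed to remove. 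The ``essential uniqueness of geodesics'' you cite from the abstract is Theorem \ref{thm:uniqueness}, which assumes \emph{strong} displacement convexity of $\sE_N$ for finite $N$ — a strictly different (and incomparable) hypothesis from $CD(K,\infty)$, and in any case it concerns geodesics in $X$, not a selection principle for lifts of Wasserstein geodesics. Under $CD(K,\infty)$ alone there is no known route from the entropy inequality to an $L^\infty$ bound on $\rho_t$.

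The paper's actual mechanism (Lemma \ref{lma:CD}) is different and is the one idea your proposal is missing: the Lott--Villani definition of $CD(K,\infty)$ demands the convexity inequality \eqref{eq:CDdef} for \emph{every} $F\in\mathcal{DC}_\infty$ along a single common geodesic $\pi$, not just for $F_\infty(r)=r\log r$. Taking $F(r)=r^p$ (which lies in $\mathcal{DC}_N$ for all $N\in[1,\infty]$) in \eqref{eq:CDdefabs} gives
\[
\|\rho_t\|_{L^p(m)}^p\le\Bigl(\frac{c}{L}\Bigr)^{p-1},\qquad L=\inf\{\beta_t(x_0,x_1):t\in[0,1],\,d(x_0,x_1)\le 2r\}\ge e^{-\frac16|K|(2r)^2},
\]
and letting $p\to\infty$ yields $\|\rho_t\|_{L^\infty(m)}\le c/L\le \tfrac{2}{m(B)}e^{|K|r^2}$ with $c=2/m(B)$. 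This is why the result is stated for the Lott--Villani $CD(K,\infty)$ class and explicitly \emph{not} claimed under Sturm's single-functional condition — the opposite of your guess that the entropy functional alone suffices once a good lift is singled out. With this lemma in hand, the remainder of your argument goes through as written.
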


We do not have average integrals in Theorem \ref{thm:CD1} like we had in \eqref{eq:Poincaredefinition}.
If we were to write the average integrals here we would have to multiply the right-hand side of the inequality
by a factor $m(B(x,2r))/m(B(x,r))$. This factor could well be unbounded as $r \downarrow 0$ in an infinite dimensional space. 
Although for $CD(K,\infty)$ spaces the assumption for $m$ to be doubling is not very natural
we always have doubling for $CD(K,N)$ spaces when $N<\infty$: by the Bishop-Gromov inequality any $CD(K,N)$ space $(X,d,m)$
is doubling with a constant $2^N$, see for example \cite{V2009}. Therefore for $CD(K,N)$ spaces we can
write our local Poincar\'e inequalities in a more standard form.

\begin{theorem}\label{thm:CD2}
 Any $CD(K,N)$ space with $K \le 0$ supports the weak local Poincar\'e inequality
 \[
  \dashint_{B(x,r)}|u - \langle u\rangle_{B(x,r)}|dm \le 2^{N+2} r e^{\sqrt{(N-1)|K|}2r} \dashint_{B(x,2 r)}gdm.
 \]
 In particular, any $CD(0,N)$ space supports the uniform weak local Poincar\'e inequality
 \[
  \dashint_{B(x,r)}|u - \langle u\rangle_{B(x,r)}|dm \le 2^{N+2} r \dashint_{B(x,2 r)}gdm.
 \]
\end{theorem}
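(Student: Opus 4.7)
The plan is to mirror the optimal-transport strategy behind Theorem \ref{thm:CD1}, but to exploit the dimensional information in $CD(K,N)$ by using the R\'enyi entropy $\sE_N$ and its associated distortion coefficients $\tau^{(t)}_{K,N}$ rather than a Shannon-type bound. The factor $2^N$ in the constant is essentially free: by the Bishop--Gromov inequality recalled just before the statement, $m(B(x,2r)) \le 2^N m(B(x,r))$, so converting between unnormalized integrals and averages costs at most this factor, producing the coefficient $2^{N+2}$ when combined with the $4$ appearing in the analogue of Theorem \ref{thm:CD1}. The $CD(0,N)$ assertion is then the specialization $K=0$, in which the exponential collapses.

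Fix $x \in X$ and $r>0$, write $B = B(x,r)$ and $2B = B(x,2r)$, and take two normalized probability measures $\mu_0, \mu_1$ supported in $B$; the averaged Poincar\'e bound will follow by integrating the resulting inequality over such pairs. The $CD(K,N)$ hypothesis produces a Wasserstein geodesic $(\mu_t)_{t\in[0,1]}$ with $\mu_t = \rho_t m$ along which every functional in $\mathcal{DC}_N$, and in particular $\sE_N$, satisfies its displacement convexity inequality. Disintegrating the associated transport plan on $\Geo(X)$ and unravelling the $\sE_N$ inequality yields the pointwise bound
\[
 \rho_t(\gamma_t)^{-1/N} \ge \tau^{(1-t)}_{K,N}\!\bigl(d(\gamma_0,\gamma_1)\bigr)\rho_0(\gamma_0)^{-1/N} + \tau^{(t)}_{K,N}\!\bigl(d(\gamma_0,\gamma_1)\bigr)\rho_1(\gamma_1)^{-1/N}
\]
for a.e.\ transport geodesic $\gamma$. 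Since every such $\gamma$ has length at most $2r$ and remains inside $2B$, elementary estimates on $\sinh(tx)/\sinh(x)$ convert this into a uniform density bound of the form $\|\rho_t\|_\infty \le e^{2r\sqrt{(N-1)|K|}}/m(B)$, which is the source of the exponential factor in the statement.

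To close the argument I would apply the upper-gradient inequality $|u(y)-u(z)| \le d(y,z) \int_0^1 g(\gamma_t^{y,z})\,dt$ along $\pi$-a.e.\ geodesic, pull the time integral out by Fubini, and bound $\int g\, d\mu_t \le \|\rho_t\|_\infty \int_{2B} g\, dm$. With $d(y,z) \le 2r$, this feeds into an estimate for $\int_B |u - \langle u\rangle_B|\,dm$ of the shape $2 \cdot 2r\cdot \|\rho_t\|_\infty m(B) \int_{2B} g\, dm$; dividing by $m(B)$ to pass to averages on the left, and multiplying and dividing by $m(2B)$ on the right (costing one Bishop--Gromov factor $2^N$), gives the stated constant $2^{N+2}$. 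The main obstacle is that without nonbranching the geodesic $(\mu_t)$ need not arise from a unique lift to $\Geo(X)$, so one must select a disintegration on which the full $\mathcal{DC}_N$ convexity, and not merely the $\sE_N$ inequality, remains valid along the chosen geodesic; this is where the almost-everywhere uniqueness of geodesics announced in the abstract is expected to enter.
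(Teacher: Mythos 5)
Your overall architecture (median split of $B$ giving the factor $4$, a uniform $L^\infty$ bound on the interpolant density with the $\sinh$-type exponential, the upper-gradient inequality along geodesics of length at most $2r$ staying in $B(x,2r)$, and one Bishop--Gromov factor $2^N$ to pass to averages) matches the paper. But the way you propose to obtain the density bound has a genuine gap, and it is precisely the gap this paper is written to avoid. You derive $\|\rho_t\|_\infty \lesssim e^{2r\sqrt{(N-1)|K|}}/m(B)$ from the \emph{pointwise} inequality $\rho_t(\gamma_t)^{-1/N} \ge \tau^{(1-t)}_{K,N}\rho_0(\gamma_0)^{-1/N} + \tau^{(t)}_{K,N}\rho_1(\gamma_1)^{-1/N}$ along a.e.\ transport geodesic. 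Localizing the integral $CD(K,N)$ inequality to individual geodesics in this way is exactly the step that requires nonbranching (or almost-everywhere uniqueness of geodesics): in a branching space the conditional measures of the lifted plan do not decompose nicely, and the pointwise form is not known to follow from \eqref{eq:CDdef}. You acknowledge this and propose to fill the hole with the almost-everywhere uniqueness of geodesics from Theorem \ref{thm:uniqueness}, but that theorem assumes \emph{strong} displacement convexity of $\sE_N$, which a $CD(K,N)$ space is not known to satisfy (the paper's implication diagram has no arrow from the Lott--Villani condition to strong displacement convexity). So the fallback is unavailable and the density bound is unproven as stated. A further warning sign: your argument uses only the single functional $\sE_N$, so if it worked it would prove the theorem under Sturm's condition, which the paper explicitly says it cannot do.

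The paper's actual device (Lemma \ref{lma:CD}) sidesteps pointwise localization entirely: it tests the integral inequality \eqref{eq:CDdefabs} against the power functions $F(r)=r^p$, which lie in $\mathcal{DC}_N$ for every $N$ and every $p\ge 1$, obtaining $\|\rho_t\|_{L^p(m)}^p \le (c/L)^{p-1}$ with $L=\inf\beta_t$ over the relevant range, and then lets $p\to\infty$ to get $\|\rho_t\|_{L^\infty(m)}\le c/L$. This uses the full class $\mathcal{DC}_N$ (hence the Lott--Villani form of the condition) but requires no information about individual geodesics, no disintegration, and no uniqueness. If you replace your pointwise step with this $L^p\to L^\infty$ argument, the rest of your outline goes through and yields the stated constant $2^{N+2}e^{\sqrt{(N-1)|K|}\,2r}$.
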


Notice that the constant $2^{N+2}$ here is better than the constant $2^{2N+1}$ obtained in \cite{LV2007} for nonbranching
$CD(0,N)$ spaces. So far we have considered $CD(K,N)$ spaces where the weak displacement convexity is required for a class
of functionals. We are unable to prove the previous two theorems from the curvature-dimension conditions of Sturm where the
weak displacement convexity is required for only one functional.

The nonnegativity of $N$-Ricci curvature can also be generalized to metric measure spaces using gradient flows.
This is done by requiring the existence of a flow satisfying the so called evolution variational inequality (E.V.I.)
\[
 \limsup_{h \downarrow 0}\frac{1}{2h}\left(W_2^2(\nu,\mu_{t+h})-W_2^2(\nu,\mu_{t})\right) \le \sE_N(\nu) - \sE_N(\mu_t)
\]
for all $t \ge 0$ and for any probability measure $\nu$ which is absolutely continuous with respect to the measure $m$. Here $W_2$
is the Wasserstein distance between measures which will be defined in Section \ref{sec:definitions}. 
For example in compact Alexandrov spaces with curvature bounded below an E.V.I. condition is satisfied \cite{O2007, GO2010}.
The functional $\sE_N$ in the E.V.I. is the critical entropy functional which depends on the dimension bound $N$. 
For finite $N$ it is the R\'enyi entropy functional $\sE_N$ which corresponds to the $CD(0,N)$ spaces. For $N = \infty$
the critical functional is the Shannon entropy functional $\sE_\infty$ corresponding to $CD(0,\infty)$ spaces.

The E.V.I. condition is also stable under the measured Gromov-Hausdorff convergence (see for instance \cite{S2007} for this type 
of statement). It is perhaps easier to relate the E.V.I. condition to the 
previous two curvature-dimension conditions after noticing that it implies the strong displacement convexity of the entropy functional 
corresponding to the E.V.I., see \cite{DS2008}. This means that the functional is not only displacement convex along one geodesic
between any two measures, as was required in the definitions by Sturm, and Lott and Villani, but it is in fact displacement convex
along any geodesic between any two measures. This strong displacement convexity is not by itself stable under the measured 
Gromov-Hausdorff convergence and therefore it is a poor notion of curvature-dimension bound.
Clearly the curvature-dimension condition of Sturm is also weaker than strong displacement convexity and thus weaker than the E.V.I. condition.
What is not so clear is how the E.V.I. condition relates to the $CD(0,N)$ spaces.
The following diagram gathers the implications which we are aware of between the local Poincar\'e inequality and the different 
curvature-dimension conditions that are mentioned in this paper. 
Notice that in this paper we consider the E.V.I. condition and strong displacement convexity of $\sE_N$ only for the curvature
lower-bound $K = 0$. We are not aware of a generalization of the E.V.I. condition for all $K \in \R$ and $N \in [1,\infty]$.
The curvature-dimension condition $CD(K,N)$ however works for all $K$ and $N$.
\[
\xymatrix {
   & *+[F-,]{\txt{\textsc{Sturm}: \\weak displacement\\ convexity for $\sE_N$ \\ \emph{(stable condition)}}}  
   & *+[F-,] {\txt{\textsc{Lott-Villani}: \\weak displacement\\ convexity for all $\sF \in \mathcal{DC}_N$ \\ \emph{(stable condition)}}} \ar @{=>} [l] \ar @{=>} [d]\\
 *+[F-,]{\txt{E.V.I. for $\sE_N$\\ \emph{(stable condition)}}} \ar @{=>} [r]
   & *+[F-,]{\txt{strong displacement\\ convexity for $\sE_N$\\ \emph{(unstable condition)}}} \ar @{=>} [u] \ar @{=>} [r]
   & *+[F]{\txt{local Poincar\'e\\ inequality}}
}
\]

As indicated by the previous diagram we have a local Poincar\'e inequality (analogous to Theorem \ref{thm:CD1}) for spaces where the 
entropy functionals are strongly displacement convex, and therefore also for the spaces satisfying the corresponding E.V.I. condition.

\begin{theorem}\label{thm:EVI1}
 Suppose that $\sE_\infty$ (or $\sE_N$) is strongly displacement convex in a geodesic metric measure space $(X,d,m)$.
 Then the weak local Poincar\'e inequality
 \[
  \int_{B(x,r)}|u - \langle u\rangle_{B(x,r)}|dm \le 4 r \int_{B(x,2 r)}gdm
 \] 
 is satisfied.
\end{theorem}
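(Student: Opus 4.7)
My plan follows the classical optimal-transport template for proving Poincar\'e inequalities, using strong displacement convexity to replace the non-branching hypothesis of \cite{LV2007}. I would first apply the standard duplication
\[
\int_B |u - \langle u\rangle_B|\, dm \le \frac{1}{m(B)} \int_B \int_B |u(y) - u(z)|\, dm(y)\, dm(z),
\]
with $B = B(x,r)$, together with the upper-gradient bound $|u(y) - u(z)| \le d(y,z) \int_0^1 g(\gamma(t))\,dt \le 2r \int_0^1 g(\gamma(t))\,dt$ along a geodesic $\gamma$ from $y$ to $z$. Passing to a probability measure $\Pi$ on geodesics joining points of $B$ with endpoint marginals both equal to $\chi_B m / m(B)$, and writing $\mu_t = (e_t)_\# \Pi$, the proof reduces to producing such a $\Pi$ whose intermediate marginal $\mu_t$ has density at most $2/m(B)$ on $B(x,2r)$ for almost every $t \in [0,1]$: the support of $\mu_t$ lies in $B(x,2r)$ automatically by the triangle inequality, and then $\int g\, d\mu_t \le (2/m(B)) \int_{B(x,2r)} g\, dm$ combines with the factor $2r$ to give the claimed $4r \int_{B(x,2r)} g\, dm$.

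The key ingredient supplied by strong displacement convexity is a density estimate for Wasserstein geodesics: for any absolutely continuous probability measures $\nu_0, \nu_1$ with densities bounded by $L$, there exists a Wasserstein geodesic $(\nu_t)$ from $\nu_0$ to $\nu_1$ with $\|\nu_t\|_{L^\infty(m)} \le L$ for every $t \in [0,1]$. I would argue by contradiction and sub-coupling: if at some $t$ the density of $\nu_t$ exceeded $L$ on a set $A$ of positive measure, one restricts the dynamical optimal plan to the geodesics passing through $A$ at time $t$; restrictions of optimal transport plans remain optimal, so the sub-plan represents a genuine Wasserstein sub-geodesic whose endpoint densities inherit the bound $L$. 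Applying strong displacement convexity of $\sE_\infty$ (or $\sE_N$) to this sub-geodesic yields an entropy inequality incompatible with the concentration of $\nu_t$ on $A$, producing the contradiction. The essential point is that the convexity is available along \emph{every} Wasserstein geodesic --- this is precisely the distinction between strong and weak displacement convexity, and the feature that makes the argument succeed without a non-branching hypothesis.

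Armed with the density estimate, I would build $\Pi$ by combining Wasserstein geodesics between suitable pairs of absolutely continuous sub-measures of $\chi_B m / m(B)$. The direct choice $\nu_0 = \nu_1 = \chi_B m / m(B)$ is useless since the only Wasserstein geodesic from a measure to itself is constant; instead one averages over transports between auxiliary measures, arranged so that the combined dynamical plan covers the pairings $(y,z) \in B \times B$ needed for the upper-gradient step while the piecewise density estimate accumulates to the required $2/m(B)$ bound on $\mu_t$. The main obstacle I anticipate is the density estimate: converting the integral entropy inequality of strong displacement convexity into a pointwise $L^\infty$ bound via a sub-coupling argument that carefully preserves Wasserstein-optimality of the restricted plan. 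Once that lemma is in place, the combinatorial assembly of $\Pi$ and the arithmetic of the upper-gradient reduction are essentially bookkeeping.
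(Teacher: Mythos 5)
Your density lemma is essentially the paper's Lemma \ref{lma:EVI}, proved the same way: restrict the dynamical plan to the geodesics passing at time $t$ through a set where the density would exceed the bound, use that strong displacement convexity applies to the restricted plan (this is exactly where strong, as opposed to weak, convexity enters), and derive a contradiction via Jensen's inequality. That part of your proposal is sound. The gap is in the step you dismiss as ``bookkeeping'': the construction of the plan $\Pi$.

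You ask for a dynamical plan whose endpoint coupling has \emph{both} marginals equal to $\tfrac{1}{m(B)}m|_B$ and which ``covers the pairings $(y,z)\in B\times B$'' --- i.e.\ whose endpoint coupling is (or dominates) the product measure $\tfrac{1}{m(B)^2}\,m|_B\otimes m|_B$ --- while keeping the intermediate densities at $2/m(B)$. No such plan is constructed, and the natural candidates fail: a single Wasserstein geodesic between two copies of $\tfrac{1}{m(B)}m|_B$ produces an optimal coupling, never the product; decomposing the product coupling fibrewise forces Dirac endpoint marginals (to which Lemma \ref{lma:EVI} does not apply and which lead to the $t^{-N}$-type estimates and the worse constant of \cite{LV2007}); and decomposing it by chopping $B$ into $k$ pieces of equal measure gives sub-measures of density $k/m(B)$, so the accumulated intermediate density degrades as $k$ grows while the endpoint coupling still fails to be the product. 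The paper's resolution is to \emph{not} require the product coupling at all. One splits $B$ by the median $M$ of $u$ into $B^+$ and $B^-$ with $m(B^+)=m(B^-)=\tfrac12 m(B)$ and $u\le M$ on $B^-$, $u\ge M$ on $B^+$, and takes a single $\pi\in\GeoOpt\bigl(\tfrac{2}{m(B)}m|_{B^+},\tfrac{2}{m(B)}m|_{B^-}\bigr)$. Since $|u(\gamma(0))-u(\gamma(1))|=|u(\gamma(0))-M|+|M-u(\gamma(1))|$ for $\pi$-a.e.\ $\gamma$, the integral $\int|u(\gamma(0))-u(\gamma(1))|\,d\pi$ depends only on the \emph{marginals} of the endpoint coupling and equals $\tfrac{2}{m(B)}\int_B|u-M|\,dm$, which dominates $\tfrac{1}{m(B)^2}\iint_{B\times B}|u(y)-u(z)|\,dm\,dm$. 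The endpoint densities are $2/m(B)$, your density lemma propagates this bound to all intermediate times, and the constant $4$ follows. Without the median device (or some substitute that likewise removes the need for the product coupling) your assembly step cannot be completed.
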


When we combine the doubling property of the measure $m$ in the $N<\infty$ case with Theorem \ref{thm:EVI1} we get again
a more standard version of local Poincar\'e inequality.

\begin{corollary}\label{cor:EVI2}
 Suppose that $\sE_N$ is strongly displacement convex in a geodesic metric measure space $(X,d,m)$.
 Then the space $X$ admits the weak local Poincar\'e inequality
 \[
  \dashint_{B(x,r)}|u - \langle u\rangle_{B(x,r)}|dm \le 2^{N+2} r \dashint_{B(x,2 r)}gdm.
 \]
\end{corollary}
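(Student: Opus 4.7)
The plan is to reduce Corollary \ref{cor:EVI2} directly to Theorem \ref{thm:EVI1} by post-processing with a doubling estimate. Theorem \ref{thm:EVI1}, applied under the same hypothesis of strong displacement convexity of $\sE_N$, already supplies the non-normalized weak Poincar\'e inequality
\[
\int_{B(x,r)}\bigl|u - \langle u\rangle_{B(x,r)}\bigr|\,dm \le 4r \int_{B(x,2r)} g\,dm.
\]
The only remaining task is to turn these into averaged integrals, for which we need a quantitative doubling estimate for $m$ in terms of $N$.

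For this I would invoke the chain of implications indicated by the diagram in the introduction. Strong displacement convexity of $\sE_N$ asks for the convexity inequality along every Wasserstein geodesic, and in particular along at least one, so it implies Sturm's weak displacement convexity for $\sE_N$; this is the Sturm version of $CD(0,N)$. The Bishop-Gromov inequality is then available and yields, as recalled explicitly in the discussion following Theorem \ref{thm:CD1}, the estimate
\[
m(B(x,2r)) \le 2^N\, m(B(x,r)) \qquad \text{for every } x \in X \text{ and } r>0.
\]

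To conclude, I divide both sides of the inequality from Theorem \ref{thm:EVI1} by $m(B(x,r))$ and multiply and divide on the right by $m(B(x,2r))$:
\[
\dashint_{B(x,r)}\bigl|u - \langle u\rangle_{B(x,r)}\bigr|\,dm \le 4r\, \frac{m(B(x,2r))}{m(B(x,r))} \dashint_{B(x,2r)} g\,dm \le 4 \cdot 2^N\, r \dashint_{B(x,2r)} g\,dm,
\]
which is exactly the desired inequality with constant $2^{N+2}$.

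There is no serious obstacle in this argument: all of the difficulty has been absorbed into Theorem \ref{thm:EVI1}, and what remains is a standard averaging computation plus one classical consequence (Bishop-Gromov) of a weaker curvature-dimension condition implied by the hypothesis. The only point that deserves care is to confirm that Sturm's one-geodesic formulation of $CD(0,N)$, rather than the stronger Lott-Villani formulation involving all of $\mathcal{DC}_N$, already suffices to extract the doubling constant $2^N$; this is known to be the case.
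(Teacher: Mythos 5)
Your proof is correct and is essentially the paper's own argument: the paper likewise obtains Corollary \ref{cor:EVI2} simply by combining Theorem \ref{thm:EVI1} with the doubling bound $m(B(x,2r))\le 2^N m(B(x,r))$ and dividing through by $m(B(x,r))$. The one point you flag as needing care is harmless, since that doubling bound follows directly from displacement convexity of $\sE_N$ along a Wasserstein geodesic from the normalized restriction of $m$ to $B(x,2r)$ to $\delta_x$ together with Jensen's inequality (exactly the computation appearing in the proof of Theorem \ref{thm:uniqueness}), without needing to pass through Sturm's full $CD(0,N)$ condition.
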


To once more emphasize that not all of the desired connections between local Poincar\'e inequalities and
curvature-dimension bounds are known, we state the missing ones corresponding to $K=0$ here as a question.

\begin{question}\label{q:1}
 Does the conclusion of Theorem \ref{thm:EVI1} or Corollary \ref{cor:EVI2} remain valid if we require only
 weak displacement convexity instead of strong displacement convexity?
\end{question}

\subsection{Almost uniqueness of geodesics}

Let us now slightly change the perspective to the question of the nonbranching assumption.
In the earlier proofs of local Poincar\'e inequalities nonbranching was assumed in order to guarantee
almost uniqueness of geodesics. It is natural to ask to what extent in general we have uniqueness of 
geodesics under the curvature-dimension conditions. We prove in the last section of this paper
the following theorem, Theorem \ref{thm:uniqueness}, which says that the strong displacement convexity of $\sE_N$, and therefore
also the corresponding E.V.I. condition, implies that from every
point in the underlying space there is a unique geodesic to $m$-almost every point in the space. In connection with the local
Poincar\'e inequalities above it means that we could also prove Corollary \ref{cor:EVI2} with a larger 
constant following more directly the proof of Lott and Villani \cite{LV2009}.

\begin{theorem}\label{thm:uniqueness}
 Suppose that $\sE_N$ is strongly displacement convex in a geodesic metric measure space $(X,d,m)$.
 Then for every $x \in X$
 \[
  m(\{y \in X : \text{there exist two distinct geodesics between }x \text{ and }y\}) = 0.
 \] 
\end{theorem}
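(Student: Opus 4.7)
The plan is to argue by contradiction. I assume there exists $x\in X$ with $m(A)>0$ for
\[
A:=\{y\in X\,:\,\text{two distinct geodesics join }x\text{ to }y\},
\]
and derive a contradiction by producing three different Wasserstein geodesics from $\delta_x$ and exploiting both the convexity of $\sE_N$ along each (strong displacement convexity) and the strict concavity of $s\mapsto s^{1-1/N}$ applied to the averaged density.

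\emph{Measurable selection and localization.} The space $\Geo(X)$ of constant-speed geodesics is Polish under uniform convergence, and the set-valued map $y\mapsto\{\gamma\in\Geo(X):\gamma_0=x,\gamma_1=y\}$ is Borel measurable. The Kuratowski--Ryll-Nardzewski theorem then yields two Borel selections $\gamma^1,\gamma^2:A\to\Geo(X)$ with $\gamma^1(y)\neq\gamma^2(y)$ for every $y\in A$. Since two distinct continuous curves sharing endpoints must differ at some rational time, a countable decomposition combined with restriction to a bounded Borel subset of positive measure produces $A_0\subset A$ with $m(A_0)>0$, a time $t_0\in\Q\cap(0,1)$, and $\delta>0$ such that $d(\gamma^1_{t_0}(y),\gamma^2_{t_0}(y))\ge\delta$ for every $y\in A_0$. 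After time rescaling I may take $t_0=1/2$.

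\emph{Three Wasserstein geodesics from $\delta_x$.} Put $\mu_0:=\delta_x$ and $\mu_1:=m\llcorner A_0/m(A_0)$. The pushforwards $\pi^i:=(\gamma^i)_\#\mu_1$ together with their average $\pi:=\tfrac12(\pi^1+\pi^2)$ are all optimal dynamical plans from $\mu_0$ to $\mu_1$, because the only static coupling available from a Dirac source is the product coupling. Denoting $\mu^i_t:=(e_t)_\#\pi^i$ and $\mu_t:=(e_t)_\#\pi=\tfrac12(\mu^1_t+\mu^2_t)$, the three curves $(\mu^1_t),(\mu^2_t),(\mu_t)$ are Wasserstein geodesics from $\mu_0$ to $\mu_1$. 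Strong displacement convexity of $\sE_N$ applies to each and, together with $\sE_N(\mu_0)=0$ and $\sE_N(\mu_1)=-m(A_0)^{1/N}$, forces
\[
\sE_N(\mu_t),\ \sE_N(\mu^i_t)\ \le\ -t\,m(A_0)^{1/N}\qquad(t\in[0,1]),
\]
so $\mu_t,\mu^i_t\ll m$ for $t\in(0,1]$ with densities $\rho_t,\rho^i_t$ and $\rho_t=\tfrac12(\rho^1_t+\rho^2_t)$.

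\emph{The contradiction at $t=1/2$.} The core of the argument is to turn the branching gap $\delta$ into a quantitative entropy gain. By the $\tfrac12$-Lipschitz dependence $d(\gamma^i_{1/2}(y),\gamma^i_{1/2}(y'))\le \tfrac12 d(y,y')$ of midpoints on the endpoint (with the other endpoint held fixed at $x$), a Vitali-type decomposition $A_0=\bigsqcup_k A_0^k$ into Borel pieces of diameter smaller than $\delta/4$ keeps the midpoint images $\gamma^1_{1/2}(A_0^k)$ and $\gamma^2_{1/2}(A_0^k)$ at mutual distance at least $\delta/2$. Consequently the two density components $\rho^1_{1/2}$ and $\rho^2_{1/2}$ are supported on sets that can be separated by a positive margin on each piece, so the strict concavity of $s\mapsto s^{1-1/N}$, applied pointwise to $\rho_{1/2}=\tfrac12(\rho^1_{1/2}+\rho^2_{1/2})$, yields a quantitative strict gap
\[
-\sE_N(\mu_{1/2})\ \ge\ 2^{1/N-1}\bigl(-\sE_N(\mu^1_{1/2})-\sE_N(\mu^2_{1/2})\bigr).
\]
Combined with the strong displacement convexity bounds $-\sE_N(\mu^i_{1/2})\ge\tfrac12 m(A_0)^{1/N}$ (which in the Dirac-source setting are saturated in the non-branching/Brunn--Minkowski sharpness regime) this produces an entropy of $\mu_{1/2}$ strictly below the only value compatible with the displacement convexity estimate applied to $(\mu_t)$, giving the contradiction.

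\emph{Main obstacle.} The delicate step is verifying the saturation: one must show that the single-selection pushforward densities $\rho^i_{1/2}$ fill out as little mass as is compatible with strong displacement convexity, so that the branching-induced "doubling" at $t=1/2$ genuinely overshoots the displacement-convexity bound. Without any a priori nonbranching assumption, this requires a localized $W_\infty$-transport argument similar in spirit to that used earlier in the paper to prove the local Poincaré inequalities, exploiting crucially that convexity holds along \emph{every} Wasserstein geodesic (hence along the averaged $(\mu_t)$) and not merely along one selected by the Lott--Villani/Sturm definitions.
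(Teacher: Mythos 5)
Your argument has a structural gap that your own ``Main obstacle'' paragraph senses but does not close: strong displacement convexity gives only \emph{one-sided} bounds. Along each of your three geodesics it yields $-\sE_N(\mu_{1/2})\ge \frac12 m(A_0)^{1/N}$ and $-\sE_N(\mu^i_{1/2})\ge \frac12 m(A_0)^{1/N}$, and your splitting inequality makes $-\sE_N(\mu_{1/2})$ \emph{larger} (by the factor $2^{1/N-1}>\frac12$) than the average of the $-\sE_N(\mu^i_{1/2})$. A larger value of $-\sE_N(\mu_{1/2})$ is perfectly compatible with convexity of $\sE_N$ along $(\mu_t)$; there is no ``only value compatible with the displacement convexity estimate.'' To force a contradiction you would need the reverse bound $-\sE_N(\mu^i_{1/2})\le \frac12 m(A_0)^{1/N}$ --- your ``saturation'' --- which is false in general and which you do not prove. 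The root of the problem is that your two selected geodesics share \emph{both} endpoints, so the branching sits at $t=0$ and $t=1$ and the entropy along the averaged plan may legitimately dip far below the chord without violating convexity anywhere. In addition, the claimed $\frac12$-Lipschitz dependence of midpoints on endpoints is not true in a general geodesic space (it is an Alexandrov/nonbranching-type property, not a consequence of displacement convexity), and even granting it, separating $\gamma^1_{1/2}(A_0^k)$ from $\gamma^2_{1/2}(A_0^k)$ piece by piece does not make the total supports of $\rho^1_{1/2}$ and $\rho^2_{1/2}$ disjoint, because images of different pieces $A_0^j$, $A_0^k$ may overlap; without global disjointness the identity behind your $2^{1/N-1}$ gain fails.

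The paper's proof supplies exactly the missing idea: it localizes the branching \emph{in time}. Through a sequence of reductions it produces a set $E$ of positive measure, times $t_1<t_2$ with $\frac{t_2}{t_1},\frac{1-t_1}{1-t_2}\le 2^{1/(2N)}$, and a ball $B(w,\delta/2)$ such that the two selected geodesics from each $y\in E$ to $x$ agree on all of $[0,t_1]$ while at time $t_2$ one lies in $B(w,\delta/2)$ and the other does not (this single ball gives global, not piecewise, disjointness of the time-$t_2$ densities). Because the two plans coincide at $t_1$, applying convexity to the averaged plan on $[0,t_2]$ and to each plan separately on $[t_1,1]$ produces a closed cyclic chain of inequalities for the single quantity $\int\rho_{1,t_1}^{1-1/N}dm$, in which the splitting gain $2^{1/N}$ beats the loss $\frac{t_1}{t_2}\cdot\frac{1-t_2}{1-t_1}\ge 2^{-1/N}$ and yields a strict self-contradiction. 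Without this time-localization (and the global support separation), your construction does not produce any inequality that convexity forbids.
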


We do not know if the conclusion of Theorem \ref{thm:uniqueness} holds also for the Shannon entropy functional $\sE_\infty$.
The proof of Theorem \ref{thm:uniqueness} does not seem to work in this situation.
Notice that in order to have a reasonable formulation of this result for $\sE_\infty$ the conclusion has to be weakened so that it does
not involve any singular parts of the measures. The question could be stated for example in the following way.

\begin{question}
 Suppose that $\sE_\infty$ is strongly displacement convex in a geodesic metric measure space $(X,d,m)$.
 Do we then have for every $\mu, \nu \in \mathcal{P}^{ac}(X,m)$ for which we have $W_2(\mu,\nu)<\infty$,
 and for every $\pi \in \GeoOpt(\mu,\nu)$ uniqueness of geodesics in the sense that
 \[
  \pi(\{\gamma \in \Geo(X) : \text{ there exist two distinct geodesics between }\gamma(0) \text{ and }\gamma(1)\}) = 0?
 \]
\end{question}

Another way to formulate the uniqueness is to rely on the measure $m$ without using optimal plans.

\begin{question}
 Suppose that $\sE_\infty$ is strongly displacement convex in a geodesic metric measure space $(X,d,m)$.
 Do we then have 
 \[
  m\otimes m(\{(x,y) \in X\times X : \text{ there exist two distinct geodesics between }x \text{ and }y\}) = 0?
 \]
\end{question}

In the next section the relevant preliminaries will be given. Section \ref{sec:Poincare} contains
the proofs of all the local Poincar\'e inequalities presented here. In the final section, Section \ref{sec:uniqueness},
we prove Theorem \ref{thm:uniqueness} on the almost uniqueness of geodesics under the strong displacement convexity of $\sE_N$.

\begin{ack}
 Many thanks are due to Luigi Ambrosio for his lectures on optimal mass transportation in the spring of 2011 where I learned
 this topic, for the many discussions I have had with him around the subject and, in particular, for pointing out to me
 the question of uniqueness of geodesics under the strong displacement convexity. Thanks are also due to the anonymous referee
 for suggesting many improvements to the paper.
\end{ack}

\section{Preliminaries}\label{sec:definitions}

For the most parts we will follow the notation used in \cite{LV2009}. The metric measure spaces $(X,d,m)$ we consider are 
always geodesic and complete. Furthermore, the support of the measure $m$ can always be assumed to be the whole space $X$.
We denote the support of a measure $\mu$ by $\spt \mu$.
Let us denote by $\mathcal{P}(X)$ the Borel probability measures on $X$ and by $\mathcal{P}^{ac}(X,m) \subset \mathcal{P}(X)$
the probability measures in $X$ that are absolutely continuous with respect to the measure $m$.

Recall that any geodesic in a metric space $(X,d)$ can be reparametrized to be a continuous mapping $\gamma \colon [0,1] \to X$ with
\[
 d(\gamma(t),\gamma(s)) = |t-s|d(\gamma(0),\gamma(1)) \qquad \text{for all } 0 \le t \le s \le 1.
\]
We denote the space of all the geodesics of the space $X$ with such parametrization by $\Geo(X)$.
An important concept for this paper is (non)branching in geodesic metric spaces. By branching of geodesics
we mean that there are some distinct geodesics starting from the same point which follow the same path for some
initial time interval and then become disjoint.

\subsection{Measured Gromov-Hausdorff convergence}

Let us recall what we mean by measured Gromov-Hausdorff 
convergence even though we will not need it in the proofs. A sequence of compact metric measure spaces
$\{(X_i,d_i,m_i)\}_{i=1}^\infty$ is said to converge to a compact metric measure space $(X,d,m)$ in the measured
Gromov-Hausdorff sense, if there exists a sequence of Borel maps $f_i \colon X_i \to X$ and a sequence of positive numbers 
$\epsilon_i \downarrow 0$ so that
\begin{enumerate}
 \item For every $x_i,y_i \in X_i$ we have $|d(f_i(x_i),f_i(y_i))-d_i(x_i,y_i)| < \epsilon_i$.
 \item For every $x \in X$ and $i \in \N$ there exists $x_i \in X_i$ such that $d(f_i(x_i),x) \le \epsilon_i$.
 \item We have $(f_i)_\# m_i \to m$ as $i \to \infty$ in the weak-* topology of $\mathcal{P}(X)$.
\end{enumerate}

\subsection{Wasserstein space $(\mathcal{P}(X), W_2)$}

The convexity of the functionals will be considered along the geodesics in the Wasserstein space $(\mathcal{P}(X), W_2)$.
The distance between two probability measures $\mu, \nu \in \mathcal{P}(X)$ in this space is given by
\[
 W_2(\mu, \nu) = \left(\inf\left\{\int_{X\times X} d(x,y)^2d\sigma(x,y)\right\}  \right)^{1/2},
\]
where the infimum is taken over all $\sigma \in \mathcal{P}(X \times X)$ with $\mu$ as its first marginal and $\nu$ as the second,
i.e. $\mu(A) = \sigma(A \times X)$ and $\nu(A) = \sigma(X \times A)$ for all Borel subsets $A$ of the space $X$. Notice that in the
case where the distance $d$ is not bounded the function $W_2$ is strictly speaking not a distance as the
above infimum can also take an infinite value. 

Now, if it happens that there is a geodesic $\Gamma \in \Geo(\mathcal{P}(X))$ between two measures $\mu, \nu \in \mathcal{P}(X)$ in the space $(\mathcal{P}(X), W_2)$
it must be that this geodesic can be realized as a measure $\pi \in \mathcal{P}(\Geo(X))$ so that $\Gamma(t) = (e_t)_\#\pi$, where
$e_t(\gamma) = \gamma(t)$ for any geodesic $\gamma$ and $t \in [0,1]$ and $f_\#\mu$ denotes the push-forward of the measure $\mu$
under $f$, see for example \cite[Corollary 7.22]{V2009}. This realization is convenient for us when we want to
translate information from the geodesics on $\mathcal{P}(X)$ to the geodesics on $X$.
The space consisting of all measures $\pi\in \mathcal{P}(\Geo(X))$
for which the mapping $t \mapsto (e_t)_\#\pi$ is a geodesic in $\mathcal{P}(X)$ from $\mu = (e_0)_\#\pi$ to $\nu = (e_1)_\#\pi$ 
is denoted by $\GeoOpt(\mu, \nu)$. We refer to \cite{V2009} for a detailed account on the theory of optimal transportation
which includes all the basic information on the Wasserstein space.

\subsection{Convex functionals}

All the curvature-dimension conditions we consider in this paper are defined using integral functionals. For $N \in [1,\infty)$
the functionals are build using functions in the displacement convexity class $\mathcal{DC}_N$ consisting of all the continuous 
convex functions $F \colon [0,\infty) \to \R$ for which we have $F(0) = 0$ and for which the function
\[
 \lambda \mapsto \lambda^NF(\lambda^{-N})
\]
is convex on the interval $(0,\infty)$. For $N = \infty$ we use the class $\mathcal{DC}_\infty$ of continuous convex functions
$F \colon [0,\infty) \to \R$ for which we have $F(0) = 0$ and for which the function
\[
 \lambda \mapsto e^\lambda F(e^{-\lambda})
\]
is convex on the interval $(-\infty,\infty)$. Notice that $\mathcal{DC}_N \subset \mathcal{DC}_{N'}$ for
all $1 \le N' \le N \le \infty$. This means that the requirement of satisfying some condition for all $F \in \mathcal{DC}_N$
becomes more and more demanding as we decrease $N$.

Most of the functionals $\sF \colon \mathcal{P}(X) \to \R$ which we consider are of the form
\begin{equation}\label{eq:functionaldef}
 \sF(\mu) = \int_X F(\rho)dm + F'(\infty)\mu^\perp(X),
\end{equation}
where the measure $\mu$ is decomposed to the absolutely continuous part $\rho m$ and the singular part $\mu^\perp$ with respect to $m$.
The derivative at infinity is defined as
\[
F'(\infty) = \lim_{r \to \infty}\frac{F(r)}{r} 
\]
which is guaranteed to exist for all $F \in \mathcal{DC}_1$ because of the convexity assumption.

Particularly relevant integral functionals for the curvature-dimension conditions 
are defined via \eqref{eq:functionaldef} using as $F$ the functions
\[
 F_N(r) = - r^{1-\frac1N} \quad \text{and} \quad F_\infty(r) = r \log r,
\]
where $N \in [1,\infty)$. Notice that $F_N \in \mathcal{DC}_N$ and $F_\infty \in \mathcal{DC}_\infty$ and that for these functions we have
\[
 F_N'(\infty) = 0 \quad \text{and} \quad F_\infty'(\infty) = \infty
\]
meaning that the R\'enyi entropy functional $\sE_N$ built from the function $F_N$ does not see the singular part
of the measure $\mu$ whereas the Shannon entropy functional $\sE_\infty$, corresponding respectively to $F_\infty$, in the presence of
any singular part has value $\infty$.

\subsection{Displacement convexity}

The notion of displacement convexity was first used by McCann in \cite{McC1997}.
The functional $\sF \colon \mathcal{P}(X) \to \R\cup\{-\infty,\infty\}$ is called 
\emph{strongly displacement convex} if for any $\Gamma \in \Geo(\mathcal{P}(X))$ we have
\begin{equation}\label{eq:lambdaconvex}
  \sF(\Gamma(s)) \le (1-s)\sF(\Gamma(0)) + s\sF(\Gamma(1))
\end{equation}
for all $s \in [0,1]$. The functional $\sF$ is called \emph{weakly displacement convex} if for any
two measures $\mu, \nu$ there exists $\Gamma \in \Geo(\mathcal{P}(X))$ so that $\Gamma(0) = \mu$, $\Gamma(1) = \nu$ and 
$\Gamma$ satisfies the inequality \eqref{eq:lambdaconvex}. In general only the implication
\begin{center}
 strong displacement convexity $\Rightarrow$ weak displacement convexity
\end{center}
holds. In the particular case of Riemannian manifolds the converse is also true \cite{FV2007}. The converse
is also true if the space $X$ is nonbranching, see \cite[Theorem 30.32]{V2009}.

\subsection{Evolution variational inequalities}

As was mentioned in the introduction one good $N$-dimensional condition, with $N \in [1,\infty]$, for nonnegative curvature
is the requirement of the existence of a flow satisfying the so called \emph{evolution variational inequality} (E.V.I.)
\[
 \limsup_{h \downarrow 0}\frac{1}{2h}\left(W_2^2(\nu,\mu_{t+h})-W_2^2(\nu,\mu_{t})\right) \le \sE_N(\nu) - \sE_N(\mu_t)
\]
for all $t \ge 0$ and $\nu \in \mathcal{P}^{ac}(X,m)$. See \cite{AGS2008} for a comprehensive introduction to E.V.I. and
gradient flows. The E.V.I. condition is stable under measured Gromov-Hausdorff convergence. This can be proven in a similar way
as \cite[Theorem 7.12]{AG2011}. The existence of a flow satisfying the E.V.I. implies that the entropy $\sE_N$ is strongly
displacement convex. This was shown in a very general setting for functionals in metric spaces in \cite{DS2008}.

\subsection{Curvature-dimension conditions $CD(K,N)$}

In order to define the $CD(K,N)$ spaces we need some more notation. Given $K \in \R$ and $N \in (1,\infty]$, we define
\[
 \beta_t(x,y) = \begin{cases}
                 e^{\frac{1}{6}K(1-t^2)d(x,y)^2}  & \text{if }N = \infty,\\
                 \infty  & \text{if }N < \infty, K> 0 \text{ and }\alpha >\pi,\\
                 \left(\frac{\sin(t\alpha)}{t\sin \alpha}\right)^{N-1}  & \text{if }N < \infty, K> 0 \text{ and }\alpha \in [0,\pi],\\
                 1  & \text{if }N < \infty \text{ and } K = 0,\\
                 \left(\frac{\sinh(t\alpha)}{t\sinh \alpha}\right)^{N-1}  & \text{if }N < \infty \text{ and } K < 0,
                \end{cases}
\]
where
\[
 \alpha = \sqrt{\frac{|K|}{N-1}}d(x,y).
\]
For $N = 1$ we define
\[
 \beta_t(x,y) = \begin{cases}
                 \infty  & \text{if } K> 0, \\
                 1 & \text{if }K \le 0.
                \end{cases}
\]

We say that $(X,d,m)$ is a $CD(K,N)$ space, with the interpretation that it has $N$-Ricci curvature bounded below by $K$,
if for any two measures $\mu_0, \mu_1 \in \mathcal{P}(X)$ with $W_2(\mu_0,\mu_1)<\infty$ there
exists $\pi \in \GeoOpt(\mu_0,\mu_1)$ so that along the Wasserstein geodesic $\mu_t = (e_t)_\#\pi$ for every $F \in \mathcal{DC_N}$
and for every $t \in [0,1]$ we have
\begin{align}
 \sF(\mu_t) \le &~ (1-t)\iint_{X\times X}\beta_{1-t}(x_0,x_1)F\left(\frac{\rho_0(x_0)}{\beta_{1-t}(x_0,x_1)}\right)d\sigma(x_1|x_0)dm(x_0) \nonumber\\
                &~ + t\iint_{X\times X}\beta_{t}(x_0,x_1)F\left(\frac{\rho_1(x_1)}{\beta_{t}(x_0,x_1)}\right)d\sigma(x_0|x_1)dm(x_1) \nonumber\\
                &~ + F'(\infty)\left((1-t)\mu_{0}^\perp + t\mu_{1}^\perp \right), \label{eq:CDdef}
\end{align}
where we have written $\mu_0 = \rho_0m + \mu_0^\perp$ and $\mu_1 = \rho_1m + \mu_1^\perp$ to their absolutely continuous and singular
parts with respect to the measure $m$, and where $d\sigma(x_0|x_1)$ denotes the disintegrated measure of $\sigma = (e_0,e_1)_\#\pi$ with
respect to $\mu_1$ and $d\sigma(x_1|x_0)$ with respect to $\mu_0$. We will not be precise about this disintegration
as it will not be needed here. This is so because in the proofs we will use only measures $\mu_0, \mu_1 \in \mathcal{P}^{ac}(X,m)$ 
and for such measures we know by \cite[Lemma 29.6]{V2009} that the inequality \eqref{eq:CDdef} can be written as
\begin{align}
 \sF(\mu_t) \le &~ (1-t)\iint_{X\times X}\frac{\beta_{1-t}(x_0,x_1)}{\rho_0(x_0)}F\left(\frac{\rho_0(x_0)}{\beta_{1-t}(x_0,x_1)}\right)d\sigma(x_0,x_1) \nonumber\\
                &~ + t\iint_{X\times X}\frac{\beta_{t}(x_0,x_1)}{\rho_1(x_1)}F\left(\frac{\rho_1(x_1)}{\beta_{t}(x_0,x_1)}\right)d\sigma(x_0,x_1). \label{eq:CDdefabs}
\end{align}

Important thing to notice from the definition of $CD(K,N)$ spaces is that any $CD(K,N)$ space is also a $CD(K',N)$ space for every
$K' \le K$ and a $CD(K,N')$ space for any $N' \ge N$.

\section{Proof for the Poincar\'e inequalities}\label{sec:Poincare}

Before proving the Poincar\'e inequalities mentioned in the introduction let us point out the main differences between this proof 
and the proof in \cite{LV2007} for the case of nonbranching spaces. We know that in a nonbranching $CD(K,N)$ metric space 
with $N \in [1,\infty)$ there exists a unique geodesic from every point $x$ to $m$-almost every point $y$,
see \cite[Theorem 30.17]{V2009}. Therefore for every point $x$ and any ball $B$ we have a unique geodesic $\pi$ between $\delta_x$
and $\frac1{m(B)}m|_B$. Then for example in the $CD(0,N)$ case we can use the displacement convexity of $\sE_N$
along this geodesic to obtain for each $t\in [0,1]$ a bound on the density $\rho_t$ of $(e_t)_\#\pi$ with respect to $m$ of the form
\[
 \rho_t(y) \le \frac1{t^Nm(B)}
\]
at $m$-almost every point $y$. Combining such estimates for all points $x \in B$ to obtain a so called
dynamical democratic transference plan and using the arguments of Lott and Villani, most notably the symmetry of the estimates
in time, we deduce a Poincar\'e inequality
\[
 \int_{B(x,r)}|u - \langle u\rangle_{B(x,r)}|dm \le 2^{N+1} r \int_{B(x,2 r)}gdm
\]
for the case $CD(0,N)$, with $N \in [1,\infty)$. This approach has also been used by other authors, see \cite{vR2008} for a
similar proof of the local Poincar\'e inequality in nonbranching spaces using a related measure contraction property
 \cite{O2007, S2006II}.

The key observation for proving better local Poincar\'e inequalities is that it is sufficient to split the ball $B$ into two equal
sized parts $B^+$ and $B^-$ using the median of the function $u$ in the ball $B$ and then to find a geodesic $\pi$ between
$\frac1{m(B^+)}m|_B^+$ and $\frac1{m(B^-)}m|_B^-$ along which
we have a good estimate from above to the density $\rho_t$. In the case $K=0$ the bound we obtain is the best possible one: 
for all $t \in [0,1]$
\[
 \rho_t(y) \le \frac2{m(B)}
\]
at $m$-almost every point $y$. We have two rather standard ways of obtaining this bound. In the strong displacement convexity 
case we can prove it by considering restrictions of the optimal plan $\pi$. This argument has been used for example in
\cite[Chapter 19]{V2009}. For the $CD(K,N)$ spaces the bound can be proven by 
taking a sequence of more and more convex functionals in $\mathcal{CD}_N$, just like in the proof of \cite[Theorem 30.20]{V2009}.

We will write the density bounds as their own lemmas and then prove Theorem \ref{thm:CD1} from the corresponding density bound.
The rest of the weak local Poincar\'e inequalities follow analogously.
Let us start the proofs with the density bounds in the $CD(K,N)$ spaces.

\begin{lemma}\label{lma:CD}
 Suppose that $(X,d,m)$ is a $CD(K,N)$ space with $K \le 0$ and $N \in [1,\infty]$.
 Let $\mu, \nu \in \mathcal{P}^{ac}(X,m)$ be measures with densities bounded from above by a constant $c$
 and so that $W_2(\mu,\nu)<\infty$.
 Suppose also that $D = \diam(\spt \mu \cup \spt \nu) < \infty$. Then there exists $\pi \in \GeoOpt(\mu,\nu)$ so that
 for every $t \in [0,1]$ we have
 \[
   \rho_t(x) \le \begin{cases}      
               e^{\frac{1}{6}|K|D^2}c & \text{if }N = \infty,\\
               e^{\sqrt{(N-1)|K|}D}c & \text{if }N < \infty
             \end{cases}
 \]
 at $m$-almost every $x \in X$, where $\rho_t$ is the density of $(e_t)_\#\pi$ with respect to $m$.
\end{lemma}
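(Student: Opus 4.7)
The plan is to apply the $CD(K,N)$ inequality \eqref{eq:CDdefabs} to the one-parameter family $F_p(r) = r^p$, $p > 1$, read off an $L^p(m)$ estimate for the density of the Wasserstein interpolant, and then send $p \to \infty$. This is essentially the strategy of \cite[Theorem 30.20]{V2009}, adapted to the present diameter-dependent setting as sketched in the remarks preceding the lemma. One first checks that $F_p \in \mathcal{DC}_N$ for every $N \in [1,\infty]$ and $p > 1$: indeed $F_p(0) = 0$, $F_p$ is continuous and convex, and the auxiliary functions $\lambda \mapsto \lambda^N F_p(\lambda^{-N}) = \lambda^{N(1-p)}$ and $\lambda \mapsto e^\lambda F_p(e^{-\lambda}) = e^{\lambda(1-p)}$ are convex because their exponents are nonpositive. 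The $CD(K,N)$ assumption therefore produces a single $\pi \in \GeoOpt(\mu,\nu)$ along which \eqref{eq:CDdefabs} holds simultaneously for every $F_p$.

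Next I would bound the distortion coefficient $\beta_t$ from below uniformly for $(x_0,x_1) \in \spt \mu \times \spt \nu$. For $K \le 0$ and $1 < N < \infty$, with $\alpha = \sqrt{|K|/(N-1)}\, d(x_0,x_1) \le \sqrt{|K|/(N-1)}\, D$, the elementary inequalities $\sinh(t\alpha) \ge t\alpha$ and $\sinh \alpha \le \alpha e^\alpha$ give
\[
 \beta_t(x_0,x_1) \ge \left(\frac{\alpha}{\sinh \alpha}\right)^{N-1} \ge e^{-(N-1)\alpha} \ge e^{-\sqrt{(N-1)|K|}\,D}.
\]
For $N = \infty$ the bound $\beta_t(x_0,x_1) \ge e^{-|K|D^2/6}$ is immediate from the definition, while the $N = 1$ case is trivial since $\beta_t \equiv 1$. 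Denote by $A$ the resulting uniform upper bound on $\beta_t^{-1}$, which is precisely the constant in the statement of the lemma divided by $c$.

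Substituting $F_p$ into \eqref{eq:CDdefabs} yields $\frac{\beta}{\rho_0}F_p(\rho_0/\beta) = \rho_0^{p-1}\beta^{1-p}$, and after integrating against $\sigma$ and using its marginals $\mu_0$ and $\mu_1$ one arrives at
\[
 \int \rho_t^p\, dm \le A^{p-1}\Bigl[(1-t)\int \rho_0^p\, dm + t\int \rho_1^p\, dm\Bigr].
\]
Since $F_p'(\infty) = \infty$, finiteness of the right-hand side forces $\mu_t$ to be absolutely continuous with density $\rho_t$. Using $\rho_i \le c$ to bound $\int \rho_i^p\, dm \le c^{p-1}$, taking $p$-th roots, and letting $p \to \infty$ gives $\|\rho_t\|_\infty \le A\cdot c$, which is the claim. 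The main point to watch is that the lower bound on $\beta$ be sharp enough so that $A^{(p-1)/p}\to A$ without loss in the limit; apart from this the argument is a routine $L^p$-to-$L^\infty$ limit, and it is exactly the $CD(K,N)$ hypothesis (rather than Sturm's weaker version) that provides the single $\pi$ working for the full family $\{F_p\}$.
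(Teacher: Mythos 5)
Your proposal is correct and follows essentially the same route as the paper: test the $CD(K,N)$ inequality \eqref{eq:CDdefabs} with $F(r)=r^p$ (which lies in every class $\mathcal{DC}_N$), bound the distortion coefficients $\beta_t$ below uniformly on $\spt\mu\times\spt\nu$ by the same constants, obtain a uniform $L^p$ bound on $\rho_t$, and let $p\to\infty$. The only cosmetic differences are that you bound $\int\rho_0^p\,dm$ by $c^{p-1}$ via the marginals of $\sigma$ rather than pointwise, and you derive the lower bound on $\beta_t$ from $\sinh(t\alpha)\ge t\alpha$ instead of the monotonicity/limit argument, both yielding the identical estimates.
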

\begin{proof}
 Take the geodesic $\pi \in \GeoOpt(\mu,\nu)$ along which every $F \in \mathcal{DC}_N$ satisfies \eqref{eq:CDdef}.
 Take $p\ge 1$ and define $F(r) = r^p$. Because $F \in \mathcal{DC}_N$ for every $N \in [1,\infty]$ and the measures $\mu$ and $\nu$
 have no singular part with respect to $m$, we get by \eqref{eq:CDdefabs}
\begin{align*}
 ||\rho_t||_{L^p(m)}^p  = \sF(\mu_t) \le &~ (1-t)\iint_{X\times X}\frac{\beta_{1-t}(x_0,x_1)}{\rho_0(x_0)}F\left(\frac{\rho_0(x_0)}{\beta_{1-t}(x_0,x_1)}\right)d\sigma(x_0,x_1)\\
                &~ + t\iint_{X\times X}\frac{\beta_{t}(x_0,x_1)}{\rho_1(x_1)}F\left(\frac{\rho_1(x_1)}{\beta_{t}(x_0,x_1)}\right)d\sigma(x_0,x_1)\\
 = & ~ (1-t)\iint_{X\times X}\left(\frac{\rho_0(x_0)}{\beta_{1-t}(x_0,x_1)}\right)^{p-1}d\sigma(x_0,x_1)\\
                &~ + t\iint_{X\times X}\left(\frac{\rho_1(x_1)}{\beta_{t}(x_0,x_1)}\right)^{p-1}d\sigma(x_0,x_1)\\
 \le & ~ (1-t)\iint_{X\times X}\left(\frac{c}{L}\right)^{p-1}d\sigma(x_0,x_1)
                 + t\iint_{X\times X}\left(\frac{c}{L}\right)^{p-1}d\sigma(x_0,x_1) \\
 = &~ \left(\frac{c}L\right)^{p-1},
\end{align*}
 where $\sigma = (e_0,e_1)_\#\pi$ and $L = \inf\{\beta_{t}(x_0,x_1) :  t\in[0,1], d(x_0,x_1) \le D\}$. We then have
 \[
  ||\rho_t||_{L^\infty(m)} \le \lim_{p \to \infty} \left(\frac{c}L\right)^{\frac{p-1}p} = \frac{c}L.
 \]

 So it remains to estimate $L$ from below for different $N$ and $K$. If $K = 0$ or $N = 1$ we have $L = 1$.
 If $N = \infty$ we have
 \[
  \beta_{t}(x_0,x_1) = e^{\frac{1}{6}K(1-t^2)d(x_0,x_1)^2} \ge e^{\frac{1}{6}KD^2}.
 \]
 Finally, if $1 < N < \infty$ and $K < 0$ we get
 \begin{align*}
   \beta_{t}(x_0,x_1) & = \left(\frac{\sinh(t\alpha)}{t\sinh \alpha}\right)^{N-1} \ge \lim_{s \downarrow 0} \left(\frac{\sinh(s\alpha)}{s\sinh \alpha}\right)^{N-1}
                       = \left(\frac{2\alpha}{e^\alpha - e^{-\alpha}}\right)^{N-1} \\
   & \ge e^{-\alpha(N-1)} \ge \exp\left(-\sqrt{\frac{|K|}{N-1}}D(N-1)\right) = e^{-\sqrt{(N-1)|K|}D}.
 \end{align*} 
\end{proof}

Next we deduce the sharp bound from the strong displacement convexity of $\sE_\infty$.

\begin{lemma}\label{lma:EVI}
 Suppose that $\sE_\infty$ (or $\sE_N$) is strongly displacement convex in a geodesic metric measure space $(X,d,m)$.
 Let $\mu, \nu \in \mathcal{P}^{ac}(X,m)$ be measures with densities bounded from above by a constant $c$.
 Then for every $\pi \in \GeoOpt(\mu,\nu)$ we have for every $t \in [0,1]$
 \[
   \rho_t(x) \le c
 \]
 at $m$-almost every $x \in X$, where $\rho_t$ is the density of $(e_t)_\#\pi$ with respect to $m$.
\end{lemma}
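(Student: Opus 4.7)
The plan is to argue by contradiction using the standard principle that a normalized restriction of an optimal dynamical plan is again an optimal dynamical plan. Localizing on the alleged ``bad'' set, the strong displacement convexity of the entropy will be forced to contradict a Jensen-type lower bound.

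Concretely, I would assume $A := \{x \in X : \rho_t(x) > c\}$ has $m(A) > 0$, set $B := e_t^{-1}(A) \subset \Geo(X)$ and $K := \pi(B) = \int_A \rho_t \, dm$, noting that by definition $K > c \cdot m(A)$. The first step is to form the restricted plan $\tilde{\pi} := K^{-1}\pi|_B \in \mathcal{P}(\Geo(X))$ and invoke the restriction property to conclude $\tilde{\pi} \in \GeoOpt(\tilde{\mu}_0, \tilde{\mu}_1)$, so that $s \mapsto \tilde{\mu}_s := (e_s)_\# \tilde{\pi}$ is a $W_2$-geodesic. The domination $\tilde{\pi} \le K^{-1} \pi$ immediately gives the endpoint bounds $\tilde{\rho}_0, \tilde{\rho}_1 \le c/K$ $m$-almost everywhere, while $\tilde{\mu}_t$ is supported on $A$ since $\tilde{\pi}$ lives on $e_t^{-1}(A)$.

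The second step is to apply strong displacement convexity to $\sE_\infty$ along $s \mapsto \tilde{\mu}_s$,
\[
 \sE_\infty(\tilde{\mu}_t) \le (1-t)\sE_\infty(\tilde{\mu}_0) + t\, \sE_\infty(\tilde{\mu}_1),
\]
and squeeze the two sides against each other. On the right I would use the pointwise inequality $x \log x \le x \log M$ for $0 \le x \le M$ with $M = c/K$, giving $\sE_\infty(\tilde{\mu}_i) \le \log(c/K)$ for $i \in \{0,1\}$. On the left I would apply Jensen's inequality for the convex map $x \mapsto x \log x$ against the probability measure $m|_A / m(A)$ to obtain $\sE_\infty(\tilde{\mu}_t) \ge -\log m(A)$. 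Combining collapses to $K \le c \cdot m(A)$, contradicting the strict inequality built into $K$. The $\sE_N$ case runs identically after replacing $x \log x$ by $-x^{1-1/N}$ and reversing the direction of Jensen (since $x^{1-1/N}$ is concave), producing $(K/c)^{1/N} \le m(A)^{1/N}$ and hence the same final contradiction.

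The hard part will be the restriction step: one must know that $\tilde{\pi}$ produces an honest Wasserstein geodesic between its own marginals, not merely a Lipschitz curve of measures interpolating them, so that the convexity inequality is applicable to the specific path $(e_s)_\# \tilde{\pi}$. This is exactly where the \emph{strong} form of displacement convexity is indispensable: weak displacement convexity supplies a convex geodesic between any chosen pair of endpoints, but that geodesic need not coincide with the one induced by restricting $\pi$, so one could not read off a density bound for the particular curve we are interested in.
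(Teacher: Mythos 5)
Your proposal is correct and follows essentially the same route as the paper: restrict the optimal dynamical plan to the geodesics passing through the high-density set at time $t$, use strong displacement convexity along the (normalized) restricted plan, bound the endpoint entropies by the density bound $c$, and play this against a Jensen lower bound for the intermediate entropy. The only cosmetic difference is that the paper works with superlevel sets $\{\rho_t \ge a\}$ and concludes $a \le c$, while you argue by contradiction directly on $\{\rho_t > c\}$; you also correctly identify the restriction property of optimal plans as the point where strong (rather than weak) displacement convexity is indispensable.
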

\begin{proof}
 We prove the claim for $\sE_\infty$. The proof for $\sE_N$ is essentially the same.
 Take any
 \[
  \pi \in \GeoOpt\left(\mu,\nu\right)
 \]
 and let $\rho_t$ be the density of $(e_t)_\#\pi$ with respect to $m$. 
 Take $t \in (0,1)$ and $a \in \R$ so that $m(A_a) > 0$ with $A_a = \{x \in X : \rho_t(x) \ge a\}$.
 Define $\Gamma \subset \Geo(X)$ as
 \[
  \Gamma = \{\gamma \in \Geo(X) : \gamma(t) \in A_a\}.
 \]

 Because of the strong displacement convexity assumption $\sE_\infty$ is convex also along $\frac1{\pi(\Gamma)}\pi|_\Gamma$.
 Write $\tilde\rho_t$ for the density of $(e_t)_\#\frac1{\pi(\Gamma)}\pi|_\Gamma$ with respect to $m$.
 From the displacement convexity of $\sE_\infty$ along the geodesic $\frac1{\pi(\Gamma)}\pi|_\Gamma$ we get an upper bound
 \begin{align*}
  \int_{A_a}\frac{\rho_t}{\pi(\Gamma)}\log \frac{\rho_t}{\pi(\Gamma)}dm & = \int_{X}\tilde\rho_t\log \tilde\rho_tdm 
        \le (1-t)\int_{X}\tilde\rho_0\log \tilde\rho_0dm  + t\int_{X}\tilde\rho_1\log \tilde\rho_1dm \\
      & \le (1-t)\int_{X}\tilde\rho_0\log \frac{c}{\pi(\Gamma)}dm  + t\int_{X}\tilde\rho_1\log\frac{c}{\pi(\Gamma)}dm
       = \log\frac{c}{\pi(\Gamma)}.
 \end{align*}
 On the other hand, from Jensen's inequality we obtain the lower bound
 \begin{align*}
 \int_{A_a}\frac{\rho_t}{\pi(\Gamma)}\log \frac{\rho_t}{\pi(\Gamma)}dm & = m(A_a) \left(\dashint_{A_a}\frac{\rho_t}{\pi(\Gamma)}\log \frac{\rho_t}{\pi(\Gamma)}dm \right)\\
     & \ge m(A_a) \left(\dashint_{A_a}\frac{\rho_t}{\pi(\Gamma)}dm \right)\log \left(\dashint_{A_a}\frac{\rho_t}{\pi(\Gamma)}dm \right) \\
     & = \log \left(\dashint_{A_a}\frac{\rho_t}{\pi(\Gamma)}dm \right) 
      \ge \log \left(\dashint_{A_a}\frac{a}{\pi(\Gamma)}dm \right) =\log\frac{a}{\pi(\Gamma)}.
 \end{align*}
 Combining these bounds we get $a \le c$ and thus we have proven the claim. 
\end{proof}

Now we are ready to prove Theorem \ref{thm:CD1} noting that the rest of the weak local Poincar\'e inequalities follow with a similar proof.

\begin{proof}[Proof of Theorem \ref{thm:CD1}]
 Suppose that $u$, $g$, $x$ and $r$ are given. Let us abbreviate $B = B(x,r)$. Define $M$ as the median of $u$ in the ball
 $B$. In other words,
 \[
   M = \inf\left\{a \in \R : m(\{u > a\}) \le \frac{m(B)}2\right\}.
 \]
 Using the median $M$ we split the ball $B$ into two Borel sets $B^+$ and $B^-$ so that $B = B^+ \cup B^-$,
 $B^+ \cap B^- = \emptyset$, $m(B^+) = m(B^-)$ and
 \begin{equation}\label{eq:separation}
   u(x) \le M \le u(y) \qquad \text{for all } (x,y) \in B^- \times B^+.
 \end{equation} 
 These sets exist because the measure $m$ has no atoms and thus we can split the level set $u^{-1}(M)\cap B$, if necessary,
 so as to make the sets $B^+$ and $B^-$ have the same measure. Let
 \[
  \pi \in \GeoOpt\left(\frac1{m(B^+)}m|_{B^+},\frac1{m(B^-)}m|_{B^-}\right)
 \]
 be the geodesic given by Lemma \ref{lma:CD} and let $\rho_t$ be the density of $(e_t)_\#\pi$ with respect to $m$. 
 By Lemma \ref{lma:CD} we have for all $t \in [0,1]$ at $m$-almost every $y\in X$
 \[
  \rho_t(y) \le \frac{2}{m(B)}e^{\frac16|K|(2r)^2}\le \frac{2}{m(B)}e^{|K|r^2}.
 \]

 Now observe that from \eqref{eq:separation} we get an equality
 \[
   |u(\gamma(0)) - u(\gamma(1))| = |u(\gamma(0)) - M| + |M - u(\gamma(1))|
 \]
 for $\pi$-almost every $\gamma \in \Geo(X)$. Therefore
 \begin{align*}
  \int_{\Geo(X)} & |u(\gamma(0)) -  u(\gamma(1))| d\pi(\gamma)\\
   & = \int_{\Geo(X)} |u(\gamma(0)) - M|d\pi(\gamma) + \int_{\Geo(X)}|M - u(\gamma(1))|d\pi(\gamma)\\
   & = \frac{2}{m(B)}\int_{B^+}|u(x) - M|dm(x) + \frac{2}{m(B)}\int_{B^-}|M-u(x)|dm(x)\\
   & = \frac{2}{m(B)}\int_{B}|u(x) - M|dm(x).
 \end{align*}
 The rest of the proof follows the same lines as the proof of \cite[Theorem 2.5]{LV2007}.
 Let us repeat the key steps here for the convenience of the reader.
 Notice that $\pi$-almost every $\gamma \in \Geo(X)$ is contained in the ball $B(x,2r)$. Thus
 \begin{align*}
  \int_{B(x,r)}|u - \langle u\rangle_{B(x,r)}|dm & \le \frac{1}{m(B)}\iint_{B \times B} |u(x) - u(y)|dm(x)dm(y) \\
   & \le \frac{1}{m(B)}\iint_{B \times B} (|u(x) - M| + |M - u(y)|)dm(x)dm(y) \\
   & = 2\int_{B} |u(x) - M|dm(x) \\
   & = m(B)\int_{\Geo(X)} |u(\gamma(0)) - u(\gamma(1))| d\pi(\gamma)\\
   & \le 2rm(B) \int_{\Geo(X)}\int_0^1 g(\gamma(t))dt d\pi(\gamma)\\
   & = 2rm(B) \int_0^1 \int_Xg(x)\rho_t(x)dm(x)dt\\
   & \le 4re^{|K|r^2} \int_0^1 \int_{B(x,2 r)}g(x)dm(x)dt = 4 re^{|K|r^2} \int_{B(x,2 r)}gdm.
 \end{align*}
\end{proof}

\begin{remarks}
 (i) In the proof of Theorem \ref{thm:CD1} above we obtained only a weak version of the local Poincar\'e inequality because 
 in general it might be that a geodesic between two points $y,z \in B(x,r)$ does not stay inside the ball $B(x,r)$.
 In a metric space  $(X,d)$ where balls are convex in the sense that all the geodesics between any two points in a ball
 stay inside the ball the above proof immediately gives a strong local Poincar\'e inequality. For example in those
 $CD(0,\infty)$ spaces where the balls are convex we have
 \[
  \dashint_{B(x,r)}|u - \langle u\rangle_{B(x,r)}|dm \le 4 r \dashint_{B(x,r)}gdm.
 \]

 (ii) In a doubling geodesic metric space a weak local Poincar\'e inequality implies a strong local Poincar\'e inequality.
 When we move from the weak inequality to the strong inequality the constant usually has to be enlarged. However,
 from the assumption that the entropy $\sE_N$ is strongly displacement convex we can directly prove
 a strong local Poincar\'e inequality with the same constant as in Corollary \ref{cor:EVI2}:
 \begin{equation}\label{eq:strong}
   \dashint_{B(x,r)}|u - \langle u\rangle_{B(x,r)}|dm \le 2^{N+2} r \dashint_{B(x,r)}gdm.
 \end{equation}
 With two simple observations we can modify the proof of Theorem \ref{thm:CD1} to give this inequality.
 The first observation is that we do not have to use geodesics in the proof but instead we can take a collection of curves whose
 lengths are uniformly bounded from above. As we will shortly see, suitable curves can be constructed by combining geodesics.
 The second observation is that a geodesic from a point $y \in B(x,r)$ to the center $x$ always stays inside the ball $B(x,r)$. 

 With a similar
 proof as for Lemma \ref{lma:EVI} we can show that for a geodesic $\pi^1$ from $\frac1{m(B^+)}m|_{B^+}$ to $\delta_x$
 we have for all $t \in [0,\frac12]$ at $m$-almost $y\in X$ the estimate
 \begin{equation}\label{eq:curvebound}
  \rho_t^1(y) \le \frac{2^{N+1}}{m(B)},
 \end{equation}
 where $\rho_t^1$ is the density of $(e_t)_\#\pi$ with respect to $m$. 
 The same argument works for a geodesic $\pi^2$ from $\frac1{m(B^-)}m|_{B^-}$ to $\delta_x$.

 By Lemma \ref{lma:EVI} we know that for any $\pi^3 \in \GeoOpt((e_\frac12)_\#\pi^1,(e_\frac12)_\#\pi^2)$ we have for 
 every $t \in [0,1]$ at $m$-almost every $y \in X$ the same bound \eqref{eq:curvebound}
 for the density of $(e_t)_\#\pi^3$ with respect to $m$. If we combine these three geodesics to a rectifiable curve
 $\Gamma \colon [0,1] \to \mathcal{P}(X)$ by defining
 \[
  \Gamma(t) = \begin{cases}
               (e_{2t})_\#\pi^1 & \text{if } 0 \le t \le \frac14,\\
               (e_{2(t-\frac14)})_\#\pi^3 & \text{if } \frac14 < t < \frac34,\\
               (e_{2(1-t)})_\#\pi^2 & \text{if } \frac34 \le t \le 1,\\
              \end{cases}
 \]
 we get a curve in $\mathcal{P}(X)$ joining $\frac1{m(B^+)}m|_{B^+}$ to $\frac1{m(B^-)}m|_{B^-}$ so that the support of $\Gamma(t)$
 is always inside $B$ and we have for all $t\in[0,1]$ the upper bound $\frac{2^{N+1}}{m(B)}$ for the density of $\Gamma(t)$ at
 $m$-almost all $y \in X$. Also, when we consider the curve $\Gamma$ as a measure on the rectifiable curves of $X$, this measure
 is concentrated on curves in $X$ that have length bounded above by $2r$. Therefore, the same estimates as in the proof of
 Theorem \ref{thm:CD1} give us the strong local Poincar\'e inequality \eqref{eq:strong}.
\end{remarks}

\section{Proof for the almost uniqueness of geodesics}\label{sec:uniqueness}

Let us now turn to the proof of Theorem \ref{thm:uniqueness}. We explain the idea behind the proof with the help of Figure \ref{fig:idea}.
By assumption the entropy functional $\sE_N$ is convex along the geodesic in the space of measures which connects
measure $\mu_0$ to $\mu_1 = \delta_x$ using only geodesics in the upper
part of the space. The graph of $\sE_N$ along this geodesic is drawn on the right with a dashed line. Now consider a new
geodesic between $\mu_0$ and $\mu_1$ that moves half of the measure using the upper part and half of the measure using the lower
part of the space. With this geodesic the support of the transported measure is split to the upper and lower parts of the space.
At the time when the measure is fully split the entropy is $2^{1/N}$ times the entropy of the corresponding measure along the
geodesic which uses only the upper part of the space. On the graph in the Figure \ref{fig:idea} the entropy along this new
geodesic is drawn with a solid line. On the time intervals $[t_1,t_2]$ and $[t_3,t_4]$, when the support of the measure travels past the branching points
of the space, the entropy has a dramatic change. This change contradicts the convexity of the entropy functional along the new
geodesic proving that such branching space does not satisfy the strong convexity assumption.

\begin{figure}
   \psfrag{m0}{$\mu_0$}
   \psfrag{m1}{$\mu_1$}
   \psfrag{t}{$t$}
   \psfrag{1}{$1$}
   \psfrag{0}{$0$}
   \psfrag{t1}{$t_1$}
   \psfrag{t2}{$t_2$}
   \psfrag{t3}{$t_3$}
   \psfrag{t4}{$t_4$}
   \psfrag{E}{$\sE_N(\mu_t)$}
   \centering
   \includegraphics[width=\textwidth]{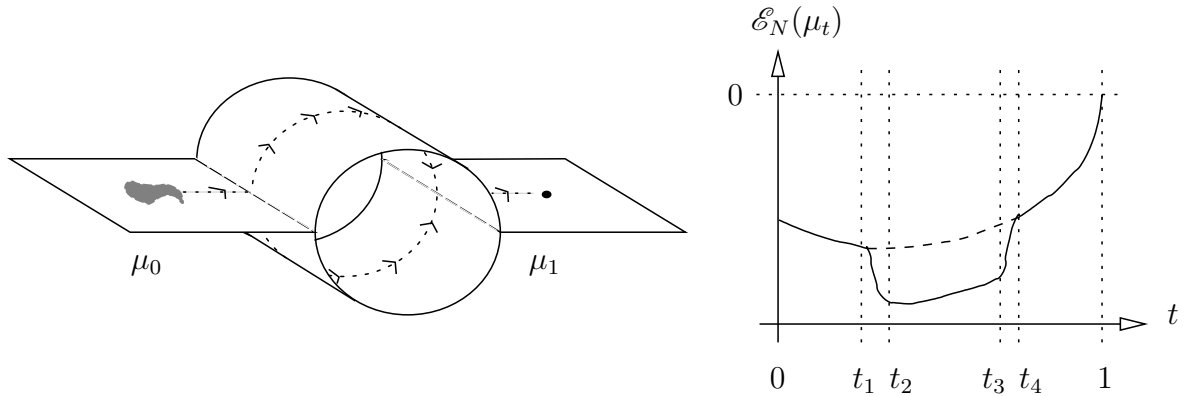}
   \caption{An illustration of the proof of Theorem \ref{thm:uniqueness}.}
   \label{fig:idea}
\end{figure}

Guided by the idea presented above we try to reduce the general case to a situation corresponding to that in Figure \ref{fig:idea}.
The first step is to show that if there are multiple geodesics joining many points in the space then there are also geodesics
which agree on some initial time interval and then branch out. Next we choose a subset from these geodesics for which the branching
happens roughly at the same time and so that all of these geodesics branch out sufficiently. The first requirement guarantees
that the large change in the entropy happens on a small enough time interval, as in Figure \ref{fig:idea}. The second requirement
tells us that the supports of the two branches of the measure are really disjoint, justifying the calculations for the drop in the
entropy. Let us now make these steps rigorous.

\begin{proof}[Proof of Theorem \ref{thm:uniqueness}]
 Suppose that the claim is not true. Let $x \in X$ be a point so that the set
 \[
  A = \{y \in X : \text{there exist two distinct geodesics between }x \text{ and }y\}
 \]
 has positive $m$-measure.

 We already know that the measure $m$ is doubling so in particular it is outer regular.
 Let us use this to show that there are lots of points $y \in A$ where the two
 distinct geodesics from $y$ to $x$ agree on a small initial time interval.

 Let $z \in X$ and $r > 0$ be so that
 \[
  m(B(z,r)\cap A) \ge \frac23 m(B(z,r))  
 \]
 and $m(S(z,r)) = 0$. (Actually, we know from \cite[Theorem 2.3]{S2006II} that either $m$ is a
 point mass or $m(S(z,r)) = 0$ for all $z \in X$ and $r > 0$.) Take a small $0 < s < 1$
 and
 \[
  \pi\in \GeoOpt\left(m(A \cap B(z,r))^{-1}m|_{A\cap B(z,r)},\delta_x\right).
 \]
 From the convexity of $\sE_N$ we get
 \[
  \int_X\rho_s^{1-\frac{1}{N}}dm \ge (1-s)\int_X\rho_0^{1-\frac{1}{N}}dm = (1-s)m(A \cap B(z,r))^{\frac{1}{N}},
 \]
 where $\rho_sm$ is the absolutely continuous part of the measure $(e_s)_\#\pi$ with respect to $m$.
 On the other hand, we always have by Jensen's inequality
 \begin{align*}
  \int_X\rho_s^{1-\frac{1}{N}}dm & = m(\{\rho_s>0\})\left(\frac{1}{m(\{\rho_s>0\})}\int_{\{\rho_s>0\}}\rho_s^{1-\frac{1}{N}}dm\right) \\
   & \le m(\{\rho_s>0\})\left(\frac{1}{m(\{\rho_s>0\})}\int_{\{\rho_s>0\}}\rho_sdm\right)^{1-\frac{1}{N}} \le m(\{\rho_s>0\})^{\frac{1}{N}}.
 \end{align*}
 Combining the previous two estimates we see that
 \[
   m(\{\rho_s>0\}) \ge(1-s)^Nm(A \cap B(z,r)) \to m(A \cap B(z,r))
 \]
 as $s \downarrow 0$. We also have $\{\rho_s>0\} \subset B(z,r_s)$ with $r_s \downarrow r$ as $s \downarrow 0$.
 Recalling that $m(S(z,r)) = 0$ we get
 \[
   m(\{\rho_s>0\}\cap B(z,r)) \to m(A \cap B(z,r))
 \]
 as $s \downarrow 0$, implying that
 \[
  m(A \cap \{\rho_s>0\} \cap B(z,r)) > 0  
 \]
 for sufficiently small $s$.
 Therefore also the set
 \begin{align*}
  A_1 = \{y \in B(z,r_\epsilon) : ~& \text{there exist two distinct geodesics between }x \text{ and }y\\
                   & \text{which agree close to } y\} 
 \end{align*}
 has positive $m$-measure.
 
 Now that we have established that there is enough branching away from the initial time let us make the next reduction
 to geodesics which branch out roughly at the same time.

 The open interval $(0,1)$ can be covered by a countable number of intervals of the form $[T_1,T_2]$ with the restriction
 \begin{equation}\label{eq:smallinterval}
  \max\left\{\frac{T_2}{T_1}, \frac{1-T_1}{1-T_2}\right\} \le 2^{\frac{1}{2N}}.
 \end{equation}
 Therefore there exist some such $T_1$ and $T_2$ for which the set
 \begin{align*}
  A_2 = \{y \in A_1 : ~& \text{there exist two distinct geodesics from }y \text{ to }x \text{ which}\\
                   & \text{agree on the interval }[0,T_1]\text{ but not on }[0,T_2]\} 
 \end{align*}
 has positive $m$-measure. Now let $\delta > 0$ so that the set
 \begin{align*}
  A_3 = \{y \in A_2 : ~& \text{there exist two distinct geodesics } \gamma, \gamma'\text{ from }y \text{ to }x \text{ which agree}\\
                   & \text{on the interval }[0,T_1]\text{ and } d(\gamma(t),\gamma'(t)) > 2\delta \text{ for some }t \in [T_1,T_2]\} 
 \end{align*}
 has positive $m$-measure.

 Again by going into a subset we find a time $t_2 \in [T_1,T_2]$ so that the set
 \begin{align*}
  A_4 = \{y \in A_3 : ~& \text{there exist two distinct geodesics } \gamma, \gamma'\text{ from }y \text{ to }x \text{ which}\\
                   & \text{agree on the interval }[0,T_1]\text{ and } d(\gamma(t_2),\gamma'(t_2)) > \delta\} 
 \end{align*}
 has positive $m$-measure.

 Let us write $t_1 = T_1$. Notice that $t_1$ and $t_2$ also satisfy the estimate \eqref{eq:smallinterval} which the original $T_1$
 and $T_2$ did.

 Because $m(A_4)>0$, there is some $w \in X$ so that the set
 \begin{align*}
  E = \{y \in A_4 : ~&\text{there exists }\gamma \in \Geo(X) \text{ with }\gamma(0) = y, \gamma(1) = x \\
                    &\text{ and } \gamma(t_2) \in B(w, \delta/2)\}
 \end{align*}
 has positive $m$-measure.
 Let us then select for all $y \in E$ a pair of geodesics $\gamma_y$, $\tilde\gamma_y$ so that $\gamma_y(0) = \tilde\gamma_y(0) = y$, 
 $\gamma_y(1) = \tilde\gamma_y(1) = x$, $\gamma_y(t) = \tilde\gamma_y(t)$ for all $t \in [0,t_1]$,
 $\gamma_y(t_2) \in B(w, \delta/2)$ and $\tilde\gamma_y(t_2) \notin B(w, \delta/2)$.
 Using these pairs we write $G_1 = \{\gamma_y : y \in E\}$ and $G_2 = \{\tilde\gamma_y : y \in E\}$.

 Next we define two measures $\pi_1, \pi_2 \in \GeoOpt((m(E))^{-1}m|_E, \delta_x)$. The first one is defined as
 \[
  \pi_1(F) = \frac{m(\{y \in E : \text{exists }\gamma \in F\cap G_1 \text{ such that } \gamma(0)= y\})}{m(E)} 
 \]
 and the second one as
 \[
  \pi_2(F) = \frac{m(\{y \in E : \text{exists }\gamma \in F\cap G_2 \text{ such that } \gamma(0)= y\})}{m(E)}.
 \]

 The definition of $G_1$ and $G_2$ guarantee that $m(\{\rho_{1,t_2}>0\} \cap \{\rho_{2,t_2}>0\}) = 0$.
 Here $\rho_{1,s}m$ and $\rho_{2,s}m$ are the absolutely continuous parts of $(e_s)_\#\pi_1$ and $(e_s)_\#\pi_2$
 with respect to the measure $m$. Using the convexity of $\sE_N$
 first to the geodesic $(\pi_1+\pi_2)/2$ between times $0$ and $t_2$ and then separately to the geodesics $\pi_1$ and $\pi_2$
 between times $t_1$ and $1$, and finally using the inequality \eqref{eq:smallinterval} we arrive at the contradiction
 \begin{align*}
  \int_X(\rho_{1,t_1})^{1-\frac{1}{N}}dm & \ge \frac{t_2-t_1}{t_2}m(E)^{\frac{1}{N}} 
   + \frac{t_1}{t_2}\left(\int_X\left(\frac{\rho_{1,t_2}}{2}\right)^{1-\frac{1}{N}}dm +\int_X\left(\frac{\rho_{2,t_2}}{2}\right)^{1-\frac{1}{N}}dm\right)\\
  & > \frac{t_1}{t_2}2^{\frac{1}{N}-1}\left(\int_X\left(\rho_{1,t_2}\right)^{1-\frac{1}{N}}dm +\int_X\left(\rho_{2,t_2}\right)^{1-\frac{1}{N}}dm\right)\\
  & \ge \frac{t_1}{t_2}2^{\frac{1}{N}-1} \frac{1-t_2}{1-t_1}\left(\int_X(\rho_{1,t_1})^{1-\frac{1}{N}}dm + \int_X(\rho_{2,t_1})^{1-\frac{1}{N}}dm\right)\\
  & = \frac{t_1}{t_2}2^{\frac{1}{N}} \frac{1-t_2}{1-t_1}\int_X(\rho_{1,t_1})^{1-\frac{1}{N}}dm \ge \int_X(\rho_{1,t_1})^{1-\frac{1}{N}}dm
 \end{align*}
 thus proving the claim.
\end{proof}

\end{document}